\title[Endomorphisms]{Varieties Characterized by their Endomorphisms}
\author{Rafael B. Andrist and Hanspeter Kraft}
\date{August, 2013}
\address{Bergische Universit\"at Wuppertal,
Fachbereich C - Mathematik und Naturwissenschaften,
Gau{\ss}str. 20,
D-42119 Wuppertal}
\email{rafael.andrist@math.uni-wuppertal.de}
\address{Universit\"at Basel, Mathematisches Institut, Rheinsprung 21, CH-4051 Basel}
\email{hanspeter.kraft@unibas.ch}
\thanks{The second author was partially supported by SNF (Schweizerischer Nationalfonds)}
\newtheorem{thm}{Theorem}
\newtheorem*{thm*}{Theorem}
\newtheorem*{conj*}{Conjecture}
\newtheorem*{prob*}{Problem}
\newtheorem*{satz*}{Satz}
\newtheorem{prop}{Proposition}
\newtheorem{lem}{Lemma}
\newtheorem*{lem*}{Lemma}
\newtheorem*{cor*}{Corollary}
\theoremstyle{definition}
\newtheorem{defn}{Definition}
\theoremstyle{remark}
\newtheorem*{rem*}{Remark}
\newtheorem{rem}{Remark}
\newcommand{\op}{\operatorname}
\newcommand{\name}[1]{\textsc{#1\/}}
\newcommand{\NN}{{\mathbb N}}
\newcommand{\ZZ}{{\mathbb Z}}
\newcommand{\PP}{{\mathbb P}}
\newcommand{\FF}{{\mathbb F}}
\newcommand{\CC}{{\mathbb C}}
\renewcommand{\AA}{{\mathbb A}}
\newcommand{\Aone}{{\mathbb A}^{1}}
\newcommand{\Pone}{{\mathbb P}^{1}}
\newcommand{\VVV}{\mathcal V}
\newcommand{\simto}{\xrightarrow{\sim}}
\newcommand{\be}{\begin{enumerate}}
\newcommand{\ee}{\end{enumerate}}
\DeclareMathOperator{\Char}{char}
\DeclareMathOperator{\End}{End}
\DeclareMathOperator{\id}{id}
\DeclareMathOperator{\Mor}{Mor}
\DeclareMathOperator{\Aut}{Aut}
\DeclareMathOperator{\Spec}{Spec}
\newcommand{\bbmat}{\begin{bmatrix}}
\newcommand{\ebmat}{\end{bmatrix}}
\newcommand{\bsmat}{\begin{smallmatrix}}
\newcommand{\esmat}{\end{smallmatrix}}
\newcommand{\OOO}{\op{\mathcal O}}
\newcommand{\Zt}{{\tilde Z}}
\renewcommand{\phi}{\varphi}
\def \itt #1,#2:{\medskip\item[$\bullet$] %
     page\ \ignorespaces#1, line\ \ignorespaces#2:\ \ignorespaces}
\DeclareMathOperator{\CR}{CR}
\newcommand{\AutPone}{\Aut(\PP^{1})}
\newcommand{\Fpbar}{\overline{\FF_{p}}}
\newcommand{\margin}[1]{}
\newcommand{\lab}[1]{\label{#1}}
\begin{document}
\begin{abstract} We show that two varieties $X$ and $Y$ with isomorphic endomorphism semigroups are isomorphic up to field automorphism if one of them is affine and contains a copy of the affine line. A holomorphic version of this result is due to the first author.
\end{abstract}

\maketitle

\subsection*{Introduction}
It is a well-known fact that an affine variety $X$ over an algebraically closed field $k$ is determined (up to isomorphism) by its $k$-algebra of polynomial functions $\OOO(X)$. It is natural to ask whether other algebraic structures like the group of automorphisms or the semigroup of endomorphisms could determine a variety.

In general, the automorphism group might consist only of the identity, and the endomorphism semigroup might consist of the identity and the constant self-maps (see Proposition~\ref{endo-free.prop}). Considering only automorphisms is usually hopeless, and the situation is not even clear in the case of $\CC^n$ where the automorphism group is huge. The advantage of semigroups lies in the natural one-to-one correspondence between points of the variety and its constant maps. Moreover, any such isomorphism of semigroups $\Phi\colon \End(X) \to \End(Y)$ is induced by conjugation with a map $\phi\colon X \to Y$, i.e. $\Phi(f) = \varphi \circ f \circ \varphi^{-1}$ (see Remark~\ref{conjugating.rem}).

For topological spaces and continuous endomorphisms, the characterization by endomorphisms has been studied in detail, see e.g. the survey by \name{Magill} \cite{Ma1975A-survey}. 

For complex manifolds and holomorphic endomorphisms, the question has been investigated first by \name{Hinkkannen} in 1992 \cite{Hi1992Functions-Conjugating} who showed with elementary methods that a map $\phi \colon \CC \to \CC$ which conjugates endomorphisms of $\CC$ (i.e. entire holomorphic functions on $\CC$) to endomorphisms of  $\CC$ is a composition of a continuous field isomorphism and a holomorphic automorphism of $\CC$, i.e. $\phi(z) = az + b$ or $\phi(z) = a\overline{z} + b$ with $a, b \in \CC, a \neq  0$. 

In 1993, \name{Eremenko} \cite{Ere1993On-the-chara} proved for Riemann surfaces admitting non-constant bounded holomorphic functions that they are determined by their semigroup of endomorphisms. His result and method of proof was extended to bounded domains in $\CC^n$ by \name{Merenkov} \cite{Me2002Equivalence-of-Domains} in 2002 and to the case of $\CC^n$ by \name{Buzzard-Merenkov} \cite{BuMe2003Maps-Conjugating} in 2003.

The case of $\CC^n$ was generalized to Stein manifolds which contain a properly embedded copy of the affine complex line by the first author \cite{An2011Stein-spaces-chara} using a different method. The basic idea is to consider $\OOO(X)$ as a subset of $\End(X)$ with the help of the embedded affine line, and the crucial part is to identify the affine line after conjugation. This is possible due to the generalization of \name{Hartogs}' theorem on separate analyticity \cite{Ha1906Zur-theorie-der-ana} to complex Lie groups and making use of the fact that the affine line can be viewed as a Lie group.

\par\smallskip
We will solve here the problem in the algebraic setting, see Theorem \ref{thm1} below.

\begin{rem} 
If an affine variety $X$ admits a unipotent automorphism $u$ of infinite order, then it contains many affine lines, namely the non-trivial orbits under the action of the additive group $k^{+}:=\overline{\langle u \rangle}$. The existence of such actions is measured by the so-called \name{Makar-Limanov}-invariant, see \cite{Ma1996On-the-hypersurfac}.
\end{rem}

\subsection*{The main results}
From now on we assume that the base field $k$ is algebraically closed of arbitrary characteristic. We will confuse the affine line $\Aone$ with the field $k$ so that $\Aone$ has the structure of a field. In our setting, a variety $X$ is a $k$-variety, and $X$ is not necessarily irreducible.

\begin{thm}\lab{thm1}
Let $X$ and $Y$ be varieties and assume that there exists an isomorphism $\End(X)\simeq \End(Y)$ of semigroups. If $X$ is affine and contains a closed subvariety isomorphic to $\Aone$, then $X \simeq Y_{\sigma}$ where $\sigma$ is an automorphism of the base field $k$.
\end{thm}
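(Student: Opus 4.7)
The first step is the standard identification of points with left zeros in the endomorphism semigroup, which upgrades the isomorphism $\Phi\colon\End(X)\to\End(Y)$ to a bijection $\phi\colon X\to Y$ with $\Phi(f)=\phi\circ f\circ\phi^{-1}$; this is Remark~\ref{conjugating.rem}. The remaining task is to promote $\phi$ from a set-theoretic bijection to an isomorphism of $k$-varieties, up to a twist by an automorphism $\sigma$ of the base field. Let $\iota\colon\Aone\into X$ be the given closed embedding, $L:=\iota(\Aone)$, and $M:=\phi(L)\subseteq Y$. Via $f\mapsto\iota\circ f$, the ring $\OOO(X)=\Mor(X,\Aone)$ sits inside $\End(X)$ as a subsemigroup $N$. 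Since $X$ is affine and $L$ is closed in $X$, the restriction $\OOO(X)\twoheadrightarrow\OOO(L)=k[z]$ is surjective, so every polynomial $p\in\End(\Aone)$ lifts to some $\tilde p\in N$; left composition with such a $\tilde p$ realizes the post-composition action of $p$ on $N\simeq\OOO(X)$ and is independent of the lift. Thus the left $k[z]$-action on $\OOO(X)$ is encoded semigroup-theoretically in $\End(X)$ and is faithfully transferred by $\Phi$ onto $\Phi(N)\subseteq\End(Y)$.

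The core of the argument --- and what I expect to be the main obstacle --- is showing that $\phi|_L\colon L\to M$ is, after identifying $M\simeq\Aone$, a field automorphism $\sigma$ of $k$. For this I would apply the conjugation identity to the translations $t_a\colon z\mapsto z+a$ and the dilations $s_a\colon z\mapsto az$, $a\in k$, lifted to $N$ as above. Using that $t_a$ is fixed-point-free for $a\ne0$ and that $s_a$ has a unique fixed point at $0$ for $a\ne1$, together with the semigroup relations $t_a\circ t_b=t_{a+b}$, $s_a\circ s_b=s_{ab}$, and $s_a\circ t_b=t_{ab}\circ s_a$, one extracts from the transferred actions on $\Phi(N)$ an algebraic model of $M$ --- a copy of $\Aone$ carrying the induced addition and multiplication --- together with the identification of $\phi|_L$ with a set-theoretic map $\sigma\colon k\to k$ that is both additive and multiplicative, hence a field automorphism. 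This is the algebraic counterpart of the Hartogs-type Lie group rigidity used in the holomorphic case \cite{An2011Stein-spaces-chara}; here, since $\phi$ enters with no a priori regularity, the rigidity must come entirely from the field structure of $\Aone=k$.

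With $\sigma$ in hand, the assignment $f\mapsto\Phi(\iota\circ f)$, viewed as a morphism $Y\to M\simeq\Aone$, defines a $\sigma$-semi-linear ring isomorphism $\OOO(X)\simto\OOO(Y)$, since the ring operations on both sides are encoded by the translations and dilations that $\Phi$ intertwines via $\sigma$. Consequently $\OOO(Y)$ is finitely generated, $Y$ is affine, and $\OOO(X)\cong\OOO(Y)\otimes_{k,\sigma}k$; this is precisely the statement $X\simeq Y_\sigma$.
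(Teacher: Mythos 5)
Your outline follows the paper's strategy (identify points with constant maps, restrict $\phi$ to the embedded line, transport the field structure, then recover $\OOO(Y)$), but it skips over the single hardest step, which is hidden in your phrase ``after identifying $M\simeq\Aone$''. From the conjugation identity you do get, cheaply, that $M=\phi(L)$ is a closed subvariety of $Y$ carrying an abstract field structure isomorphic to $k$ for which the \emph{one-variable} maps $z\mapsto z+a$, $z\mapsto az$, $z\mapsto z^{n}$ are morphisms of $M$ (they are $\Phi$-images of lifted endomorphisms). But nothing so far says that the two-variable operations $M\times M\to M$ are morphisms, so $M$ is not a priori an algebraic group, let alone a curve: since $\phi$ has no regularity whatsoever, $M$ could in principle be a higher-dimensional variety on which these separate translations and dilations happen to act by automorphisms. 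Establishing $M\simeq\Aone$ as a \emph{variety} is the core of the paper's argument (its Proposition on conjugating maps $\Aone\to Z$): one first gets irreducibility and smoothness from the transitive action of the translations, then analyzes the $\ell$-th power map ($\ell\neq\Char k$) via its separable--inseparable factorization, shows the separable part is ramified at $0$ using the differential of multiplication by a primitive $\ell$-th root of unity, invokes purity of the branch locus to force $\dim Z=1$, and finally uses the infinite automorphism group to identify $Z^{*}$ and rule out $\PP^{1}$. None of this is present in, or replaceable by, the semigroup relations among $t_{a}$ and $s_{a}$ that you list; those relations only become useful \emph{after} one knows $M\simeq\Aone$, at which point they give the additivity and multiplicativity of $\sigma$ exactly as in the paper's final step of that proposition.

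A second, smaller gap: at the end you assert that a $\sigma$-semilinear ring isomorphism $\OOO(X)\simto\OOO(Y)$ gives ``$Y$ is affine'' and $X\simeq Y_{\sigma}$. Since $Y$ is not assumed affine, you only get a bijective morphism $Y\to X\simeq\Spec\OOO(X)$ inducing an isomorphism on coordinate rings; concluding that $Y\to\Spec\OOO(Y)$ is an isomorphism (rather than, say, a bijective non-isomorphism) requires an argument --- the paper uses Zariski's Main Theorem in Grothendieck's form to factor it as an open immersion followed by a finite morphism and then compares rings. This step is routine once cited, but it should not be omitted.
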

\noindent
Here $Y_{\sigma}$ denotes the variety obtained from $Y$ by twisting with the morphism $\Spec \sigma^{-1}\colon\Spec k \to \Spec k$.
\par\smallskip
There is a canonical embedding $X \hookrightarrow \End(X)$ by sending $x\in X$ to the constant map $\gamma_{x}$ with value $x$. The following lemma is clear.
\begin{lem} The endomorphism $\gamma\in\End(X)$ is a constant map if and only if $\gamma\circ f = \gamma$ for all $f\in \End(X)$. Moreover,
we have $f\circ \gamma_{x}=\gamma_{f(x)}$ for any $f\in\End(X)$ and $x\in X$.
\end{lem}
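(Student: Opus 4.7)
The plan is to handle the two assertions in the reverse order, since the second one is purely a computation and will be useful for the first. For the identity $f\circ \gamma_{x}=\gamma_{f(x)}$, I would simply evaluate both sides at an arbitrary point $z\in X$: the left side gives $f(\gamma_{x}(z))=f(x)$, while the right side gives $f(x)$ by definition of $\gamma_{f(x)}$. Since constant maps are morphisms, $\gamma_x$ really does lie in $\End(X)$, so this equality makes sense inside the semigroup.

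For the characterization of constants, the forward implication is immediate: if $\gamma=\gamma_{y}$ for some $y\in X$, then for any $f\in\End(X)$ and any $z\in X$ we have $(\gamma\circ f)(z)=\gamma(f(z))=y=\gamma(z)$, so $\gamma\circ f=\gamma$. For the converse, I would feed a well-chosen $f$ into the hypothesis, namely $f=\gamma_{x}$ for an arbitrary $x\in X$. By the second part just proved, $\gamma\circ\gamma_{x}=\gamma_{\gamma(x)}$, and by hypothesis this equals $\gamma$. Hence $\gamma=\gamma_{\gamma(x)}$ is a constant map.

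There is no real obstacle; the only thing to be a bit careful about is that $X$ is nonempty (so that such a point $x$ exists) and that the ambient variety is such that constant maps are morphisms, both of which are harmless in this algebraic setting. The argument is purely formal and uses no hypothesis beyond the existence of constant endomorphisms, which is why the authors describe the lemma as clear.
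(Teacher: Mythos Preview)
Your proposal is correct. The paper does not actually give a proof---it simply states that the lemma is clear---so there is nothing to compare against; your argument is the natural one and matches what any reader would supply.
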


\begin{rem}\lab{conjugating.rem}
The lemma implies that an isomorphism $\Phi\colon \End(X) \simto \End(Y)$ induces a bijection $\phi\colon X \to Y$, because $\Phi(\gamma_{x})$ is again a constant function, hence of the form $\gamma_{y}$ for some $y\in Y$. Moreover, we get $\Phi(f) = \phi\circ f \circ \phi^{-1}$ for all $f\in\End(X)$. In fact, this is clear for the constant functions, by definition of the map $\phi$, and then follows for all $f\in\End(X)$ from the second part of the lemma.
\end{rem}
This leads to the following definition.

\begin{defn} 
A map $\phi\colon X \to Y$ is called {\it conjugating\/} if it is bijective and induces a homomorphism $\Phi\colon\End(X)\to\End(Y)$ by $f\mapsto \phi\circ f \circ \phi^{-1}$. It is called {\it iso-conjugating\/} if, in addition, the induced homomorphism $\Phi\colon\End(X)\to\End(Y)$ is an isomorphism.
\end{defn}
Now the theorem above is a consequence of the next result.

\begin{thm}\lab{thm2}
Let $X,Y$ be two $k$-varieties and $\phi\colon X \to Y$ an iso-conjugating map. If $X$ is affine and contains a closed subvariety isomorphic to $\Aone$, then there is an automorphism $\sigma$ of $k$  such that the composition of $\phi$ with the canonical map $Y \to Y_{\sigma}$ is an isomorphism $X \simto Y_{\sigma}$ of varieties.
\end{thm}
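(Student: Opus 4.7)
The plan is to use affineness of $X$ to produce a retraction onto $L$, transfer it to $Y$ via $\Phi$, reduce to a conjugation problem on $\End(\Aone)$, and then recover the variety structure of $\phi(L)$ from the rich internal algebra of $k[t]$ before extending to all of $X$. In detail: since $X$ is affine and $L\cong\Aone$ is closed, the restriction $\OOO(X)\onto\OOO(L)\cong k[t]$ is surjective, so lifting $t$ and composing with $(t|_L)^{-1}$ gives an idempotent $r\in\End(X)$ with image $L$. Writing $r=i\circ\pi$ with $\pi\colon X\to L$ and $i\colon L\hookrightarrow X$, a direct computation shows that $g\mapsto i\circ g\circ\pi$ is a semigroup isomorphism $\End(L)\simto r\End(X)r$. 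The transferred element $r':=\Phi(r)\in\End(Y)$ is again idempotent, hence its image $L':=r'(Y)=\phi(L)$ is a closed subvariety of $Y$; the analogous Peirce identification and the fact that $\Phi$ sends the Peirce subsemigroup at $r$ onto that at $r'$ yield a semigroup isomorphism $\End(\Aone)=\End(L)\simto\End(L')$ whose restriction to constants is $\phi|_L\colon L\to L'$.

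The crux is now to deduce, from this semigroup isomorphism together with the conjugating bijection $\phi|_L$, that $L'\cong\Aone$ as a $k$-variety up to a field twist. The internal structure of $\End(\Aone)=k[t]$ is very rich: its unit group is the affine group $k\rtimes k^*$; fixed-point sets are semigroup-detectable via $\op{Fix}(g)=\{x : g\circ\gamma_x=\gamma_x\}$; the translation subgroup, characterized as the fixed-point-free units together with $\id$, transfers to a subgroup $T'\subset\Aut(L')$ of algebraic automorphisms of $L'$ acting freely and transitively, equipped with an abstract group isomorphism $(k,+)\simto T'$; combining this with the complementary $(k^*,\cdot)$-action stabilizing a chosen basepoint recovers an abstract field structure on $L'$, and compatibility with $\phi$ yields a field automorphism $\sigma$ of $k$. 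The main obstacle I foresee, and the algebraic analogue of the Hartogs separate-analyticity input used in the holomorphic setting of \cite{An2011Stein-spaces-chara}, is to upgrade this set-theoretic data to the variety level: one must verify that the transported affine-group action is algebraic, so that $L'$ is the underlying variety of a one-dimensional connected algebraic group --- forcing $L'\cong\Aone$ --- and $\phi|_L$ becomes an isomorphism of $k$-varieties $L\simto L'_\sigma$.

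Finally, extending to all of $X$ is formal. Given $g\in\OOO(Y)=\Mor(Y,\Aone)$, compose with the variety isomorphism $\Aone\simto L'\hookrightarrow Y$ to get $\tilde g\in\End(Y)$ with image in $L'$; its $\Phi^{-1}$-preimage $\phi^{-1}\tilde g\phi$ lies in $\End(X)$ and has image in $L$, so it is a regular morphism $X\to L\cong\Aone$. Combining with the $\sigma$-twisted isomorphism $L\simto L'_\sigma$ from the previous step exhibits $g\circ\phi$ as a $\sigma$-twisted regular function on $X$; equivalently, the pullback $\phi^*\colon\OOO(Y)\to\OOO(X)$ is a $\sigma$-semilinear ring isomorphism. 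Since $X$ is affine, this is exactly the data of an isomorphism of $k$-varieties $X\simto Y_\sigma$, realized by composing $\phi$ with the canonical map $Y\to Y_\sigma$.
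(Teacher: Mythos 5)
Your overall architecture matches the paper's: restrict attention to the embedded line $L\cong\Aone$, show its image $L'=\phi(L)$ is closed with $\phi|_L$ conjugating (your idempotent/Peirce version of this reduction is a correct variant of the paper's Lemma~\ref{indconjmap.lem}, both resting on the surjectivity of $\OOO(X)\to\OOO(L)$), identify $L'$ with $\Aone$ up to a field automorphism $\sigma$, and then pull back $\OOO(Y)$ through $\End$-conjugation. But the step you yourself flag as ``the main obstacle'' is precisely the theorem's crux (Proposition~\ref{basic.prop} in the paper), and your proposed route to it does not work. Transporting the structure gives you each individual translation $\alpha_{z_0}$, multiplication $\mu_{z_0}$ and power map as a \emph{morphism} $L'\to L'$, but nothing forces the action map $k\times L'\to L'$ to be regular, so you cannot conclude that $L'$ is (the underlying variety of) a one-dimensional algebraic group. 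In particular you have no control on $\dim L'$: a set-theoretic bijection $\Aone\to L'$ is compatible with $L'$ having any positive dimension. The Hartogs/Palais separate-algebraicity input you allude to is exactly what fails here in general --- the paper's Remark notes it is available only over $k=\CC$. The paper instead proves $\dim Z=1$ by a genuinely different argument: from the transported field structure it gets smoothness and irreducibility, then factors the $\ell$-th power map ($\ell\neq\Char k$) through the normalization in the separable closure, shows the resulting finite separable cover is ramified at the origin using the multiplication by a primitive $\ell$-th root of unity, and invokes purity of the branch locus to force dimension one; it then excludes $Z^*\simeq\Aone$ (which would give $\Zt\simeq\PP^1$) to land on $Z\simeq\Aone$. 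Without this, or a substitute, your proof does not go through.

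There is a second, smaller gap at the end. A $\sigma$-semilinear ring isomorphism $\phi^*\colon\OOO(Y)\to\OOO(X)$ with $X$ affine is \emph{not} ``exactly the data of an isomorphism of $k$-varieties'': $Y$ is not assumed affine, and a non-affine variety can have the same global function ring as an affine one (e.g.\ $\AA^2\setminus\{0\}$). What the ring isomorphism gives you is a bijective morphism $\phi^{-1}\colon Y\to X$, equivalently that $Y\to\Spec\OOO(Y)$ is bijective; to conclude that $Y$ is affine and this map is an isomorphism you need the paper's Lemma~\ref{final.lem}, i.e.\ Zariski's Main Theorem (a bijective open immersion after splitting off the finite part). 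This is a short but non-optional additional argument that your sketch omits.
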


\subsection*{Endo-free varieties}\lab{endo.subsec}
The following shows that the theorems do not hold if we drop the assumption that one of the varieties contains a copy of the affine line. 
Call a variety $X$ {\it endo-free\/} if the only endomorphisms are the identity and the constant maps. In this case we can identify $\End(X)$ with the semigroup  $\{\id\}\cup X$ where the multiplication on $X$ is given by $x\circ y = x$. This shows that every bijective map $\phi\colon X \to Y$ between two endo-free varieties is iso-conjugating. Hence Theorem 2 cannot hold for an endo-free variety. 

\begin{prop}\lab{endo-free.prop}
If $k$ is not the algebraic closure of a finite field, then there exist endo-free affine smooth varieties in any dimension.
\end{prop}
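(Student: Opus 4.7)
The approach is to build, for each $n \geq 1$, an explicit smooth affine $n$-fold $X$ with $\End(X) = \{\id\} \cup X$.

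For $n = 1$ the plan is to take $X := C \setminus \{P\}$ with $C$ a smooth projective curve of genus $g \geq 3$ over $k$ satisfying $\Aut(C) = \{\id\}$. The hypothesis $k \neq \overline{\FF_p}$ enters here: over $\overline{\FF_p}$ every variety is definable over some $\FF_q$ and so inherits a non-trivial Frobenius endomorphism obstructing endo-freeness, while over any other algebraically closed field one can pick $C$ with trivial automorphism group and moduli not defined over any $\FF_q$---the locus of such curves being a non-empty open in $M_g$. Any $f \in \End(X)$ extends uniquely to $\bar f \colon C \to C$, since any rational map from a smooth curve into a proper variety is regular; by Riemann--Hurwitz a non-constant self-map of a curve of genus $\geq 2$ has degree one, so $\bar f$ is either constant or an automorphism, and by the choice of $C$ in the latter case $\bar f = \id$.

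For $n \geq 2$ the plan is analogous: take $Y$ a very general smooth projective $n$-fold of general type with $\Aut(Y) = \op{Bir}(Y) = \{\id\}$ and admitting no non-trivial dominant rational map to any proper positive-dimensional subvariety; concrete candidates are very general smooth hypersurfaces of degree $d \gg n$ in $\PP^{n+1}$ or suitable complete intersections. Set $X := Y \setminus D$ for a generic very ample divisor $D$; this is smooth and affine. An endomorphism $f \colon X \to X$ extends to a rational map $\bar f \colon Y \dashrightarrow Y$ defined off a codimension-$2$ locus. Case analysis: if $\bar f$ is dominant, general type forces it to be birational and genericity forces it to be the identity; if $\bar f$ is constant, so is $f$; if $\bar f$ has positive-dimensional proper image, it furnishes a non-trivial fibration on $Y$, excluded by the choice of $Y$.

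The main obstacle is verifying the last rigidity property of $Y$, i.e.\ the absence of a non-trivial dominant rational map $Y \dashrightarrow Z$ with $0 < \dim Z < n$. For very general high-degree hypersurfaces this is known but non-trivial, using Noether--Lefschetz-type results (Picard rank one) together with deformation-theoretic rigidity of fibrations. As in the curve case, these genericity conditions cut out a non-empty open subset of the relevant moduli, and the existence of $k$-points in this open is again exactly what the hypothesis $k \neq \overline{\FF_p}$ supplies.
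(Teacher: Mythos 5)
Your one-dimensional construction is correct and genuinely different from the paper's: the paper never leaves genus zero, working instead with $\PP^{1}$ minus $m\geq 5$ points chosen with pairwise distinct cross-ratios (to kill automorphisms) and not contained in $\PP^{1}(\Fpbar)$ (to kill Frobenius), and then passing to products of such curves for higher dimension. Your $C\setminus\{P\}$ with $g(C)\geq 3$, $\Aut(C)=\{\id\}$ and $C$ not definable over a finite field works for the same underlying reasons, but you should make explicit that in characteristic $p$ Riemann--Hurwitz only controls the separable part: a non-constant self-map factors as $f_{s}\circ F^{m}$ with $f_{s}\colon C^{(p^{m})}\to C$ separable, Riemann--Hurwitz gives $\deg f_{s}=1$, and $m\geq 1$ would force $C\simeq C^{(p^{m})}$, i.e.\ a Frobenius-periodic moduli point --- which is what your ``not defined over $\FF_{q}$'' hypothesis excludes. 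Note also that this condition is not open (it is an open subset of $M_{g}$ minus its $\Fpbar$-points); non-emptiness over $k\neq\Fpbar$ holds because such a $k$ has positive transcendence degree over the prime field, not because the locus is open.

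The case $n\geq 2$ has genuine gaps. First, the property you yourself identify as the main obstacle --- that $Y$ admits no dominant rational map onto a positive-dimensional proper subvariety $Z\subsetneq Y$ --- is precisely what is needed to exclude non-constant, non-dominant endomorphisms of $Y\setminus D$ (whose image closure is such a $Z$, e.g.\ a divisor), and it is neither proved nor reduced to a citable statement: Picard rank one does not forbid dominant maps onto divisors, and ``deformation-theoretic rigidity of fibrations'' is a pointer, not an argument. Second, ``very general'' means the complement of countably many proper closed subsets, not an open set; over the countable fields $\overline{\QQ}$ and $\overline{\FF_{p}(t)}$, both permitted by the hypothesis $k\neq\Fpbar$, such a locus may a priori contain no $k$-point, so your closing sentence, which treats it as an open set whose non-emptiness follows from $k\neq\Fpbar$, does not go through. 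Third, in characteristic $p$ the step ``dominant plus general type implies birational'' again requires the separable/inseparable factorization and the exclusion of $Y\simeq Y^{(p^{m})}$, and the inputs you invoke about very general hypersurfaces are characteristic-zero theorems. The paper sidesteps all of this by taking products of punctured rational curves, where every relevant map is controlled by the elementary Lemma~\ref{endo.lem}; to salvage your higher-dimensional construction you would have to supply the first two points in full.
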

We thank the referee who pointed out to us that the assumption for $k$ is necessary. In fact, for $k=\overline{\FF_{p}}$ any $k$-variety is defined over a finite field and thus admits the Frobenius endomorphism. 

The proof of Proposition~\ref{endo-free.prop} is based on the following lemma which was communicated to us by  \name{J\'er\'emy Blanc}.

\begin{lem} \lab{endo.lem}
\be
\item Let $\phi\colon\PP^{1}\to \PP^{1}$ be a bijective morphism. 
If $\Char k = 0$, then $\phi$ is an isomorphism.
If $\Char k = p>0$, then $\phi=F^{m}\circ\alpha$ where $\alpha \in\Aut(\PP^{1})$ and $F$ is the Frobenius map.
\item
Let $\Delta_{1},\Delta_{2} \subset \PP^{1}$ be finite subsets with cardinality $|\Delta_{2}|\geq |\Delta_{1}|\geq 3$. Then every non-constant morphism $\phi\colon \PP^{1}\setminus\Delta_{1}\to \PP^{1}\setminus\Delta_{2}$ extends to a bijection $\tilde\phi\colon \PP^{1}\to\PP^{1}$ sending $\Delta_{1}$ to $\Delta_{2}$. In particular, $|\Delta_{1}|=|\Delta_{2}|$ if such a $\phi$ exists.
\ee
\end{lem}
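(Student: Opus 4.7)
For part (a), I would translate to function fields. A bijective morphism $\phi\colon\PP^{1}\to\PP^{1}$ is non-constant, hence finite, and corresponds to a finite extension $L := \phi^{*}(k(t)) \subseteq k(t)$. Bijectivity on closed points forces the separable degree of this extension to be one (the generic fibre must be a single point), so $k(t)/L$ is purely inseparable of some degree $p^{m}$. Since $k$ is algebraically closed and hence perfect, $k(t)^{p^{m}}=k(t^{p^{m}})$; this subfield already has index $p^{m}$ in $k(t)$, so by a degree count $L=k(t^{p^{m}})$. The generator $\phi^{*}(t)$ of $L$ is then a M\"obius function of $t^{p^{m}}$, and pulling the $p^{m}$-th root of the coefficients (available by perfectness) into an automorphism $\alpha\in\Aut(\PP^{1})$ writes $\phi=F^{m}\circ\alpha$. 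In characteristic zero $m=0$, so $\phi$ is an isomorphism.

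For part (b), first extend $\phi$ to $\tilde\phi\colon\PP^{1}\to\PP^{1}$ (automatic, since $\PP^{1}\setminus\Delta_{1}$ is a smooth curve and $\PP^{1}$ is projective). The hypothesis that $\phi$ avoids $\Delta_{2}$ becomes $\tilde\phi^{-1}(\Delta_{2})\subseteq\Delta_{1}$. The entire statement reduces to showing that $\tilde\phi$ is bijective: granted this, one has $|\Delta_{2}|=|\tilde\phi^{-1}(\Delta_{2})|\leq|\Delta_{1}|$, which combined with $|\Delta_{2}|\geq|\Delta_{1}|$ forces $|\Delta_{1}|=|\Delta_{2}|$ and $\tilde\phi(\Delta_{1})=\Delta_{2}$.

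To prove bijectivity I would decompose $\tilde\phi=\tilde\phi_{s}\circ F^{m}$ as in part (a), with $\tilde\phi_{s}$ separable of degree $d_{s}$. Since $F^{m}$ is bijective on points, $|\tilde\phi^{-1}(\Delta_{2})|=|\tilde\phi_{s}^{-1}(\Delta_{2})|$. Riemann--Hurwitz applied to the separable morphism $\tilde\phi_{s}\colon\PP^{1}\to\PP^{1}$ gives $\sum_{P}(e_{P}-1)\leq 2d_{s}-2$, whence
\[
|\tilde\phi_{s}^{-1}(\Delta_{2})|\;\geq\;d_{s}|\Delta_{2}|-(2d_{s}-2)\;=\;d_{s}(|\Delta_{2}|-2)+2.
\]
Inserting this into $|\Delta_{1}|\geq|\tilde\phi^{-1}(\Delta_{2})|$ together with $|\Delta_{2}|\geq|\Delta_{1}|\geq 3$ yields $(d_{s}-1)(|\Delta_{2}|-2)\leq 0$, which forces $d_{s}=1$. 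Hence $\tilde\phi_{s}\in\Aut(\PP^{1})$ and $\tilde\phi$ is bijective. The principal subtlety throughout is the positive-characteristic bookkeeping: Riemann--Hurwitz is only an inequality for $\sum(e_{P}-1)$ (because of wild ramification), and bijective morphisms need not be isomorphisms. Peeling off the inseparable Frobenius factor before invoking Riemann--Hurwitz resolves both issues uniformly, since the counting estimate needs exactly the separable inequality.
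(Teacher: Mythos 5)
Your proof is correct, and for part (b) it takes a genuinely different route from the paper. For (a) the two arguments are essentially the same in substance --- reduce to separable degree one and peel off a power of Frobenius --- though the paper compresses this into ``compose with an automorphism so that $0,1,\infty$ are fixed, hence $\phi'=\id$ or $F^m$,'' while you spell out the function-field argument; your version is the more complete of the two (only note that the degree count $L=k(t^{p^m})$ needs the containment $t^{p^m}\in L$, which holds because $k(t)=L(t)$ is a \emph{simple} purely inseparable extension). For (b) the paper argues combinatorially on fibers: from $\tilde\phi^{-1}(\Delta_2)\subseteq\Delta_1$, surjectivity already gives $|\Delta_1|=|\Delta_2|$ and that every point of $\Delta_2$ has a singleton fiber; normalizing two such fibers to $0$ and $\infty$ turns $\tilde\phi$ into $x\mapsto ax^{s}$, and a third singleton fiber (this is where $|\Delta_1|\geq 3$ enters) forces $s$ to be a power of $p$ by counting roots of $x^{s}=c$. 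You instead split off the Frobenius factor and bound the separable degree via Riemann--Hurwitz, with $|\Delta_2|\geq 3$ entering through $(d_s-1)(|\Delta_2|-2)\leq 0$; your count $|\tilde\phi_s^{-1}(\Delta_2)|\geq d_s|\Delta_2|-(2d_s-2)$ and the use of the inequality form of Riemann--Hurwitz to absorb wild ramification are both correct. The paper's argument is more elementary (no Riemann--Hurwitz at all), while yours is the standard ``omitted values'' degeneration estimate and is more robust --- it would adapt to targets of higher genus and makes explicit why wild ramification causes no trouble. Both proofs are complete.
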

\begin{proof}
(a) Composing a suitable $\beta\in\Aut(\PP^{1})$ with $\phi$ we can assume that $\phi':=\phi\circ\beta$ fixes $0,1$ and $\infty$. Hence $\phi' = \id$ if $\Char k=0$, and $\phi' = F^{m}$ if $\Char k =p>0$.

(b) We can extend $\phi$ to an endomorphism $\tilde\phi\colon \PP^{1}\to \PP^{1}$ where $\tilde\phi^{-1}(\Delta_{2})\subset\Delta_{1}$. Hence 
$|\Delta_{1}|= |\Delta_{2}|$, and $\tilde\phi$ restricts to a bijection $\Delta_{1}\to\Delta_{2}$. Choose two different points $p_{1},p_{2}\in \Delta_{1}$ and automorphisms $\alpha,\beta\in\Aut(\PP^{1})$ such that 
$$
\beta(0)=p_{1}, \quad\beta(\infty)=p_{2}, \quad \alpha(\tilde\phi(p_{1}))=0, \quad \alpha(\tilde\phi(p_{2}))=\infty.
$$
This implies that $\phi':=\alpha\circ\tilde\phi\circ\beta$ is of the form $x\mapsto ax^{s}$, and we can modify $\alpha$ so that $a=1$. If $s=1$, then $\tilde\phi=\alpha^{-1}\circ\beta^{-1}$ is an isomorphism. Otherwise, we choose $p_{3}\in\Delta_{1}\setminus\{p_{1},p_{2}\}$ and observe that the preimage of $\alpha(\tilde\phi(p_{3}))$ under $\phi'$ is a single point. This implies that $s$ is a power of $p$, hence $\tilde\phi$ is bijective.
\end{proof}

\begin{proof}[Proof of Proposition~\ref{endo-free.prop}]
We first assume that $\Char k = p>0$.
Let $m\geq 4$ and define
$$
U_{m}:=\{\Delta \subset \PP^{1}\mid |\Delta|=m \text{ and } \{0,1,\infty\} \subset \Delta\}.
$$
$U_{m}$ is a closed subset of the symmetric product $S^{m}\PP^{1}$ consisting of all unordered subsets of $\PP^{1}$ of cardinality $m$. We will also need the notation $U_{m}(\Fpbar)$ for the subset of those $\Delta\in U_{m}$ such that $\Delta \subset \Pone(\Fpbar)$.

(a) Choose $\Delta_{0} \in U_{m}$. Since an automorphism of $\PP^{1}$ is determined by the images of $0$, $1$ and $\infty$ it follows that there are only finitely many $g\in\Aut(\PP^{1})$ such that $g(\Delta_{0})\in U_{m}$:
$$
\AutPone\,\Delta_{0}\cap U_{m} = \{g_{0}(\Delta_{0}),g_{1}(\Delta_{0}),\ldots,g_{k}(\Delta_{0})\}.
$$
It now follows from Lemma~\ref{endo.lem} that for $\Delta\in U_{m}\setminus \{F^{j}g_{i}\Delta_{0}\mid j\in\ZZ, \ i=0,\ldots,k\}$, every morphism $\PP^{1}\setminus \Delta_{0} \to \PP^{1}\setminus\Delta$ is constant, as well as every morphism $\PP^{1}\setminus \Delta \to \PP^{1}\setminus\Delta_{0}$. 

\par\smallskip
(b) If $\Delta\in U_{m}$ and $\Delta\not\subset \PP^{1}(\overline{\FF_{p}})$, then every non-constant endomorphism $\phi$ of $\PP^{1}\setminus\Delta$ is an automorphism. In fact, by Lemma~\ref{endo.lem}(b), the endomorphism extends to a bijective  endomorphism of $\PP^{1}$ permuting $\Delta$,  and thus a suitable power  $\phi^{n}$ is the identity on $\Delta$ which implies that $\phi^{n}=F^{r}$ for some $r\in\NN$. Because $\Delta$ contains elements which are not fixed by any power of $F$, we must have $r=0$.

\par\smallskip
(c) Recall that the cross-ratio $\CR(p_{1},p_{2},p_{3},p_{4})$ of an ordered 4-tuple of points of $\PP^{1}$ is a projective invariant.
Denote by $U_{m}'\subset U_{m}$ the open dense subset of those $\Delta$ which satisfy the following condition:
\be
\item[(CR)] If $\{p_{1},p_{2},p_{3},p_{4}\}$ and $\{q_{1},q_{2},q_{3},q_{4}\}$ are two different subsets of 4 points of $\Delta$, then $\CR(p_{i_{1}},p_{i_{2}},p_{i_{3}},p_{i_{4}})\neq\CR(q_{j_{1}},q_{j_{2}},q_{j_{3}},q_{j_{4}})$ for every ordering of the $p_{i}$ and the $q_{j}$.
\ee
It follows that if $m\geq 5$ and  $\Delta\in U_{m}'$, then every automorphism of $\PP^{1}\setminus\Delta$ is the identity. In fact, the automorphism extends to $\PP^{1}$ and permutes the elements of $\Delta$. Since the cross-ratio is preserved under an automorphism, this permutation of $\Delta$  preserves all subsets of 4 points, hence is the identity, because $|\Delta|>4$.
\par\smallskip
(d) Finally, we claim that for $m\geq 5$ and $d\in\NN$ we can find $\Delta_{1},\ldots,\Delta_{d} \in U_{m}'$ such that the following holds:
\be
\item[(i)] $\Delta_{j}\not\subseteq \Pone(\Fpbar)$ for all $j$;
\item[(ii)] There are no non-constant morphisms $\PP^{1}\setminus \Delta_{j}\to \PP^{1}\setminus \Delta_{k}$ for $j\neq k$.
\ee
It then follows from (c) and (d)  that the $d$-dimensional affine variety 
$$
(\PP^{1}\setminus\Delta_{1})\times \cdots \times (\PP^{1}\setminus\Delta_{d})
$$ 
is smooth and endo-free.

In order to prove the claim we proceed by induction on $d$. The case $d=1$ is clear, because condition (ii) is empty. Assume that $\Delta_{1},\ldots,\Delta_{d}\in U_{m}'$ satisfy (i) and (ii). We have to construct $\Delta\in U_{d}'\setminus U_{d}'(\Fpbar)$ such that there are no non-constant morphisms $\PP^{1}\setminus\Delta_{j}\to \PP^{1}\setminus\Delta$ and  $\PP^{1}\setminus\Delta\to \PP^{1}\setminus\Delta_{j}$ for $j=1,\ldots,d$. 

We have seen in (a) that this is the case if $\Delta$ does not belong to a finite set of $F$-orbits $\{F^{j}\Sigma\mid j\in\ZZ\}$ for certain $\Sigma \in U_{d}'$. But such a $\Delta$ always exists. In fact, the subset $k \setminus\Fpbar\subset k$ is clearly not contained in a finite set of $F$-orbits $\{F^{j}a\mid j\in \ZZ\}$ of elements $a \in k$, and this carries over to $U_{m}'\setminus U_{m}'(\Fpbar)$ by looking at the $F$-equivariant map $U_{m}\to k$, $\{0,1,\infty,p_{4},\ldots,p_{m}\} \mapsto p_{4}+\cdots+p_{m}$, whose restriction to $U_{m}'\setminus U_{m}'(\Fpbar)$ contains $k\setminus\Fpbar$ in its image, because $m\geq 5$. This completes the proof of the proposition in $\Char k > 0$.
\par\smallskip
The proof in characteristic zero is similar, but much simpler. In fact, it follows from (c) that the curves $\Pone\setminus\Delta$ are endo-free if $\Delta\in U_{m}'$, and with the notation from (a) we see that all morphisms $\PP^{1}\setminus \Delta_{0} \to \PP^{1}\setminus\Delta$ and $\PP^{1}\setminus \Delta \to \PP^{1}\setminus\Delta_{0}$ are constant if $\Delta$ does not belong to the finite set $\AutPone\Delta_{0}\cap U_{m}$. So it is clear that we can find $\Delta_{1},\ldots,\Delta_{d} \in U_{m}'$ such that there are no non-constant morphisms $\PP^{1}\setminus \Delta_{j}\to \PP^{1}\setminus \Delta_{k}$ for $j\neq k$, and we are done.
\end{proof}

\subsection*{Iso-conjugating maps}
For $S \subset \End(X)$ and $x_{0}\in X$ we define the following {\it zero set\/}:
$$
\VVV(S,x_{0}):=\{x \in X \mid f(x) = x_{0}\text{ for all } f\in S\} \subset X.
$$
This is clearly a closed subset of $X$.
\begin{lem}\lab{closedimage.lem}
If $\phi\colon X \to Y$ is a conjugating map and $\Phi\colon \End(X)\to \End(Y)$ the corresponding homomorphism, then $\phi(\VVV(S,x_{0})) = \VVV(\Phi(S),\phi(x_{0}))$.
\end{lem}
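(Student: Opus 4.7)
The plan is to unwind the definitions of both sides and verify the equivalence pointwise. Since $\phi$ is bijective, it is enough to prove that for every $x \in X$,
\[
x \in \VVV(S,x_{0}) \iff \phi(x) \in \VVV(\Phi(S),\phi(x_{0})).
\]
This suffices because the desired equality then reads $\phi(\VVV(S,x_{0})) = \VVV(\Phi(S),\phi(x_{0}))$.

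The key input is the explicit formula $\Phi(f) = \phi \circ f \circ \phi^{-1}$ for every $f \in S$, which is built into the definition of a conjugating map. Given this, for any $x \in X$ and $f \in S$ one has the chain of equivalences
\[
f(x) = x_{0} \iff \phi(f(x)) = \phi(x_{0}) \iff (\phi \circ f \circ \phi^{-1})(\phi(x)) = \phi(x_{0}) \iff \Phi(f)(\phi(x)) = \phi(x_{0}),
\]
using bijectivity of $\phi$ in the first equivalence. Quantifying over $f \in S$ yields the displayed equivalence above, and hence the claim.

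There is really no obstacle here; the statement is essentially a translation of $\VVV$ through the bijection $\phi$, and once the conjugation formula from Remark~\ref{conjugating.rem} is invoked, the proof is a one-line unwinding. Alternatively, one could phrase it in the semigroup language by rewriting the condition $f(x)=x_{0}$ as $f\circ\gamma_{x} = \gamma_{x_{0}}$, applying the homomorphism $\Phi$ (which sends $\gamma_{x}$ to $\gamma_{\phi(x)}$ by construction of $\phi$), and using injectivity of the correspondence $y \mapsto \gamma_{y}$; this version has the mild advantage that it does not even need the explicit conjugation formula, only that $\Phi$ restricts to $\phi$ on constant maps.
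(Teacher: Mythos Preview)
Your argument is correct and is essentially identical to the paper's own proof: the paper likewise verifies the chain of equivalences $f(x)=x_{0}\iff \phi(f(x))=\phi(x_{0})\iff \Phi(f)(\phi(x))=\phi(x_{0})\iff \phi(x)\in\VVV(\Phi(S),\phi(x_{0}))$. Your alternative semigroup reformulation via constant maps is a nice remark but not needed.
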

\begin{proof} We have
\begin{eqnarray*}
x \in \VVV(S,x_{0}) &\iff& f(x) = x_{0} \text{ for all } f\in S \\
&\iff& \phi(f(x)) = \phi(x_{0}) \text{ for all } f\in S \\
&\iff& \Phi(f)(\phi(x)) = \phi(x_{0}) \text{ for all } f\in S \\
&\iff& \phi(x)\in \VVV(\Phi(S),\phi(x_{0}))
\end{eqnarray*}
\end{proof}

\begin{rem}\lab{closed.rem}
In general, not every closed subset $A \subset X$ is of the form $\VVV(S,x_{0})$. However, if $X$ is affine and contains a copy of the affine line, then we have an embedding $\OOO(X)=\Mor(X,\Aone) \subset \End(X)$,  and one gets  $\VVV_{X}(S) = \VVV(S,0)$ for $S\subset \OOO(X)$ and $0\in\Aone\subset X$ where $\VVV_{X}(S) \subset X$ is the zero set of $S$. Hence in this case, every closed subset $A \subset X$ is of the form $\VVV(S,x_{0})$.
\end{rem}

\begin{lem}\label{lem2}\lab{indconjmap.lem}
Let $\phi\colon X \to Y$ be a conjugating map where $X$ is affine. If $A \subset X$ is a closed subset isomorphic to $\Aone$, then $\phi(A) \subset Y$ is closed and the induced map $\phi|_{A}\colon A \to \phi(A)$ is conjugating. 
\end{lem}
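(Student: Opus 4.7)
The plan is to split the statement into two parts: (i) $\phi(A) \subset Y$ is closed, and (ii) the restricted map $\phi|_A$ is conjugating. Bijectivity of $\phi|_A$ onto $\phi(A)$ is automatic from the bijectivity of $\phi$, so (ii) reduces to showing that for every $g \in \End(A)$ the composition $\phi|_A \circ g \circ (\phi|_A)^{-1}$ is a morphism of the variety $\phi(A)$.

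For (i), I would invoke Remark~\ref{closed.rem}: since $X$ is affine and $A$ itself is a copy of $\Aone$, every closed subset of $X$ has the form $\VVV(S, x_0)$ for suitable $S \subset \End(X)$ and $x_0 \in X$. In particular $A = \VVV(S, x_0)$, and Lemma~\ref{closedimage.lem} then yields $\phi(A) = \VVV(\Phi(S), \phi(x_0))$, which is closed in $Y$.

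For (ii), the strategy is to realize each $g \in \End(A)$ as the restriction of some $\tilde g \in \End(X)$ preserving $A$. Fix an isomorphism $\iota \colon \Aone \simto A \subset X$, and use surjectivity of $\iota^* \colon \OOO(X) \to \OOO(A) = k[t]$ to lift the coordinate $t$ to some $\tilde t \in \OOO(X)$. Regarded as a morphism $\tilde t \colon X \to \Aone$, this lift satisfies $\tilde t \circ \iota = \id_{\Aone}$. Writing $g$ as a polynomial $p \in k[t]$, set $\tilde g := \iota \circ p \circ \tilde t \in \End(X)$; then $\tilde g$ factors through $A$, so $\tilde g(A) \subseteq A$, and $\tilde g \circ \iota = \iota \circ p$, i.e. $\tilde g|_A = g$ under the identification $\iota$. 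With this in hand, $\Phi(\tilde g) = \phi \circ \tilde g \circ \phi^{-1}$ lies in $\End(Y)$ and satisfies $\Phi(\tilde g)(\phi(A)) = \phi(\tilde g(A)) \subseteq \phi(A)$, so it restricts to a morphism of the closed subvariety $\phi(A)$; a direct check identifies that restriction with $\phi|_A \circ g \circ (\phi|_A)^{-1}$.

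The main obstacle, and the reason both hypotheses ($X$ affine and $A \cong \Aone$) are used, lies in the extension step. A tempting alternative would be to produce a retraction $X \to A$, but such retractions need not exist for an arbitrary closed embedding $\Aone \into X$. What is actually needed is much weaker: only a function $\tilde t \in \OOO(X)$ restricting to a coordinate on $A$, which is available because $\OOO(X) \to \OOO(A)$ is surjective. The fact that $\End(\Aone) = k[t]$ is generated by a single variable then allows the combination $\iota \circ p \circ \tilde t$ to simultaneously extend $g$ and contract all of $X$ into $A$.
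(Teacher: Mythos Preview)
Your proof is correct and follows essentially the same route as the paper: closedness via Remark~\ref{closed.rem} and Lemma~\ref{closedimage.lem}, and the conjugating property via lifting each $g\in\End(A)$ to a morphism $X\to A$ using surjectivity of $\OOO(X)\to\OOO(A)$ together with $A\simeq\Aone$. The paper simply asserts that $\Mor(X,A)\to\End(A)$ is surjective, whereas you spell out the construction $\tilde g=\iota\circ p\circ\tilde t$ explicitly.
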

\begin{proof}
By the previous remark and Lemma~\ref{closedimage.lem} the subset $B:=\phi(A)\subset Y$ is closed.
Moreover, the restriction $\OOO(X) \to \OOO(A)$ is surjective. This implies that $\Mor(X,A) \to \Mor(A,A)=\End(A)$ is surjective, because $A \simeq \Aone$. If $\alpha\in\End(A)$ and $\tilde\alpha\colon X \to A$ a lift of $\alpha$, then $\Phi(\tilde\alpha)=\phi\circ \tilde\alpha\circ \phi^{-1}\in\End(Y)$ maps $B$ into itself, and $\Phi(\tilde\alpha)|_{B} = \phi|_{A}\circ \alpha \circ (\phi|_{A})^{-1}\in\End(B)$.
\end{proof}

\subsection*{Algebraic fields}\lab{fields.subsec}
The basic ingredient in the proof of Theorem~\ref{thm2} is the following result.

\begin{prop}\label{basic.prop}
Let $\phi\colon \AA^{1} \to Z$ be a conjugating map. Then there is an isomorphism $\psi\colon Z \simto \Aone$ such that the composition $\psi\circ\phi\colon \Aone \to \Aone$ is a field automorphism.
\end{prop}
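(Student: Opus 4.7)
The proof naturally splits into two parts: first show $Z\simeq\Aone$ as a variety, then identify $\sigma := \psi\circ\phi$ as a field automorphism.

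\emph{Stage 1: $Z\simeq\Aone$.} By Lemma~\ref{closedimage.lem} and Remark~\ref{closed.rem}, $\phi$ is a closed bijection, hence $Z$ is irreducible. The units of $\End(\Aone)$ form $\Aut(\Aone) = \op{Aff}_1(k) \cong k^+\rtimes k^*$, so $\Phi$ yields an abstract copy of $\op{Aff}_1(k)$ acting on $Z$ by regular automorphisms. The translation subgroup $T\cong (k,+)$, characterized intrinsically as the commutator subgroup, acts set-theoretically simply transitively on $Z$; and the stabilizer $M\cong k^*$ of $0$ gives an abstract $k^*$ of regular automorphisms of $Z$ fixing $\phi(0)$. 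Homogeneity under regular automorphisms forces $Z$ to be smooth.

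For dimension, take $n\geq 2$ coprime to $\Char k$ and consider $g := \Phi(x^n)\colon Z \to Z$. It is set-theoretically surjective with all fibers of cardinality $n$ except that over $\phi(0)$, which is $\{\phi(0)\}$; hence it is a finite separable morphism of degree $n$ with ramification set-theoretically $\{\phi(0)\}$. Purity of the branch locus forces this ramification locus to have pure codimension one, so $\dim Z = 1$. Let $\bar Z\supset Z$ be the smooth projective completion; the nonempty finite complement $\bar Z\setminus Z$ is permuted by the abstract $\op{Aff}_1(k)$-action. Since $\op{Aff}_1(k)/[\op{Aff}_1(k),\op{Aff}_1(k)] = k^*$ is divisible, $\op{Aff}_1(k)$ has no nontrivial finite quotient, ruling out genus $\geq 2$ (where $\Aut$ is finite) and genus $1$ (where $\Aut \cong E(k)\rtimes F$ with $F$ finite, so any embedding would land in the abelian $E(k)$, contradicting non-commutativity). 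Hence $\bar Z\simeq \PP^1$ and $Z\simeq \PP^1\setminus\{p_1,\ldots,p_n\}$ for some $n\geq 1$. If $n\geq 3$ then $\Aut(Z)$ is finite, contradiction. If $n=2$ then $\Aut(Z)\cong k^*\rtimes\ZZ/2$, whose pointwise stabilizer of any $z_0$ has order $\leq 2$, contradicting the abstract $M\cong k^*$ fixing $\phi(0)$. Thus $n=1$ and $Z\simeq\Aone$.

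\emph{Stage 2: $\sigma$ is a field automorphism.} Fix an isomorphism $\psi\colon Z\simto\Aone$; then $\sigma := \psi\circ\phi$ is a bijection with $\sigma f\sigma^{-1}\in k[x]$ for every $f\in k[x]$. For each translation $T_a$ with $a\neq 0$, $\sigma T_a\sigma^{-1}$ is a polynomial bijection without fixed points, hence a translation $y\mapsto y+\alpha(a)$; the group law forces $\alpha\colon k\to k$ to be an additive bijection. For each scaling $M_c$ with $c\in k^*\setminus\{1\}$, $\sigma M_c\sigma^{-1}$ is an affine bijection with unique fixed point $\sigma(0)$, hence $y\mapsto\beta(c)(y-\sigma(0))+\sigma(0)$ for some multiplicative bijection $\beta\colon k^*\to k^*$. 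The identity $M_c\circ T_a=T_{ca}\circ M_c$ transfers to $\beta(c)\alpha(a)=\alpha(ca)$, so $\alpha(c)=\beta(c)\alpha(1)$ and $\beta$ is additive on $k^*$; extending by $\beta(0):=0$ yields a field automorphism. Therefore $\sigma(y)=\alpha(1)\beta(y)+\sigma(0)$, and after composing $\psi$ with the affine automorphism $z\mapsto\alpha(1)^{-1}(z-\sigma(0))$ of $\Aone$, the new $\sigma$ equals $\beta$, a field automorphism of $k$.

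\emph{Main obstacle.} The hard part is Stage~1: upgrading purely set-theoretic conjugation data into algebraic structural information about $Z$. Purity of the branch locus applied to $\Phi(x^n)$ handles the dimension; the analysis of $\Aut(\PP^1)=\PGL_2(k)$ combined with the abstract existence of both $T\cong k^+$ and $M\cong k^*$ then forces $Z\simeq\Aone$. Both steps rely on having the full transferred semigroup $\Phi(\End(\Aone))$ at hand, not merely the automorphism subgroup.
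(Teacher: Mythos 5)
Your overall architecture is sound, and Stage 2 is essentially correct: it is a mild variant of the paper's final step, which transports the field structure of $\Aone$ to $Z$ and then reads off additivity and multiplicativity from the conjugated translations and multiplications. The endgame of Stage 1 --- ruling out genus $\geq 1$ and two or more punctures via the group theory of $\op{Aff}_1(k)$ --- also works and is a legitimate alternative to the paper's argument that $\PP^{1}$ admits no automorphism of order $\ell$ with a single fixed point. However, the pivotal step of Stage 1 has a genuine gap in positive characteristic.

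You assert that $g:=\Phi(x^{n})$ ``is a finite separable morphism of degree $n$'' because its set-theoretic fibers have cardinality $n$ away from $\phi(0)$. Neither finiteness nor separability follows from this. Finiteness: a surjective morphism with finite fibers is only quasi-finite. Separability is the serious issue: fiber cardinalities control only the \emph{separable} degree, and the morphism $g$ of $Z$ is constrained only through the set bijection $\phi$, so a priori it could be a separable degree-$n$ map precomposed with a power of Frobenius, which has exactly the same fibers. For an inseparable finite morphism the ramification locus is all of $Z$, so purity of the branch locus yields nothing and your dimension argument collapses in characteristic $p$. This is precisely the difficulty the paper's proof is built around: it factors the power map $\lambda\colon z\mapsto z^{\ell}$ through the separable closure of $\lambda^{*}(k(Z))$ in $k(Z)$, obtaining $\lambda=\pi\circ\bar\lambda$ with $\bar\lambda$ purely inseparable (hence bijective) and $\pi$ finite and separable \emph{by construction}; it then uses the transported multiplications to show $\pi$ is unramified over $Z^{*}$, uses the differential of $\bar\mu_{\zeta}$ (with $\zeta$ a primitive $\ell$-th root of unity) to show $\pi$ is ramified at $\bar\lambda(0)$, and only then invokes purity. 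Your argument is fine in characteristic zero, but as written it does not prove the proposition in general. A smaller flaw: ``closed bijection, hence $Z$ is irreducible'' is not a valid inference --- closedness of $\phi$ makes $\phi^{-1}$, not $\phi$, continuous; irreducibility instead follows from homogeneity, as in part (a) of the paper's proof, or from the fact that $\op{Aff}_1(k)$ has no proper subgroup of finite index.
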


\begin{proof}
The variety $Z$ inherits from $\Aone$ the structure of a commutative field isomorphic to $k$ such that the additions $\alpha_{z_{0}}\colon z\mapsto z+z_{0}$, the multiplications $\mu_{z_{0}}\colon z\mapsto z_{0}z$ and the power maps $z\mapsto z^{n}$, $n\in\NN$, are morphisms. It follows that $Z$ is smooth.

Let $0$ resp. $1 \in Z$ be the identity elements for addition resp. multiplication, and put $Z^{*}:= Z \setminus\{0\}$.

(a) $Z$ is connected, hence irreducible. In fact, if $Z^{0}$ denotes the connected component of $0$, then any other component has the form $z_0+ Z^{0}$. Since $0\in Z^{0}$ and $0 = z_{0}\,0 \in z_{0}Z^{0}$ it follows that  $z_{0} Z^{0}= Z^{0}$ for all $z_{0}\in Z^{*}$.  Hence $Z = Z^{0}$, because $Z^{0}$ contains elements $\neq 0$.

(b) Let $\ell$ be a prime different from $\Char k$, and denote by $\lambda\colon Z \to Z$ the power map $z\mapsto z^{\ell}$. Put $K:=\lambda^{*}(k(Z)) \subseteq k(Z)$ and denote by $K_{s}\subset k(Z)$ the separable closure of $K$ in $k(Z)$. 
Let $\Zt$ be the normal closure of $Z$ in $K_{s}$. Then we get the following commutative diagram
\begin{diagram}
Z & \rTo^{\bar \lambda}& \Zt\\
& \rdTo_{\lambda} & \dTo_{\pi} \\
&& Z 
\end{diagram}
where $\bar \lambda$ is finite and purely inseparable, hence bijective, and $\pi$ is finite and separable. Moreover,  the multiplications $\mu_{z}$ induce automorphisms $\bar\mu_{z}$ of $\Zt$ such that 
the following diagram commutes:
\begin{diagram}
Z & \rTo^{\mu_{z}} &  Z  & \lTo^{\supset} & Z^{*}\\
\dTo<{\bar\lambda} &&  \dTo<{\bar\lambda}>{\text{\tiny bijective}} && \dTo<{\bar\lambda}\\
\Zt & \rTo^{\bar\mu_{z}} & \Zt & \lTo^{\supset} & \Zt^{*}:=\Zt \setminus\bar \lambda(0)\\
\dTo<{\pi} &&  \dTo<{\pi} && \dTo<{\pi}\\
Z & \rTo^{\mu_{z^{\ell}}} & Z & \lTo^{\supset} & Z^{*}\\
\end{diagram}
As a consequence, $\pi\colon \Zt^{*} \to Z^{*}$ is smooth,  
because $\pi$ is smooth on a non-empty open set $U\subset \Zt$ and therefore smooth in $\bar\mu_{z}(U)$ for all $z\in Z^{*}$. 

(c) The map $\pi\colon \Zt \to Z$ is ramified in $\bar 0:=\bar \lambda(0)\in\Zt$. In fact, consider the multiplication $\mu_{\zeta}$ where $\zeta$ is a primitive $\ell$th root of unity. From above, we get the following commutative diagram:
\begin{diagram}
\Zt  &    \rTo^{\bar\mu_{\zeta}}     &            \Zt          \\
   \dTo<{\pi}  &&    \dTo>{\pi}        \\
 Z    &         \rTo^{\id}             &   Z         \\
\end{diagram}
The differential of $\bar\mu_{\zeta}$ in $\bar 0$ is a non-trivial automorphism of the tangent space $T_{\bar 0}\Zt$, because the order of $\mu_{\zeta}$ is prime to $\Char(k)$.  Hence, $d\pi_{\bar 0}\colon T_{\bar 0}\Zt \to T_{0}Z$ cannot be an isomorphism. 

(d) Now the ``purity of the branch locus'' implies that $\dim Z = 1$ (cf. \cite{AlKl1971On-the-purity-of-t}, \cite{AlKl1973Correction-to:-On-}). Hence $Z$ is either an affine or a projective curve. In both cases, $Z^{*}$ is a smooth affine algebraic curve whose automorphism group is infinite,  and so $Z^{*}\simeq \Aone\setminus\{0\}$ or $Z^{*}\simeq\Aone$. In the second case, $\Zt \simeq \PP^{1}$ which is impossible, because $\PP^{1}$ has no automorphism of order $\ell$ with a single fixed point. Thus $Z \simeq \Aone$, and there is a unique isomorphism $\psi\colon Z\simto \Aone$ with $\psi(0)=0$ and $\psi(1)=1$. It follows that $\rho:=\psi\circ\phi\colon \Aone \to \Aone$ is a conjugating map with $\rho(0)=0$ and $\rho(1)=1$. 

(e) It remains to see that $\rho$ is a field automorphism. The map $\rho\circ\mu_{z}\circ\rho^{-1}$ is an automorphism of $\Aone$ fixing $0$ and sending  $1$ to $\rho(z)$, hence it is equal to  $\mu_{\rho(z)}$. It follows that 
$$
\rho(z_{1}z_{2}) = \rho(\mu_{z_{1}}(z_{2})) = (\rho\circ\mu_{z_{1}}\circ\rho^{-1})(\rho(z_{2})) = \mu_{\rho(z_{1})}(\rho(z_{2})) = \rho(z_{1})\rho(z_{2}).
$$
Similarly, we see that $\rho\circ\alpha_{z}\circ\rho^{-1}$ is a fixed point free automorphism of $\Aone$ sending $0$ to $\rho(z)$, hence  is equal to $\alpha_{\rho(z)}$ which implies, as before, that $\rho(z_{1}+z_{2}) = \rho(z_{1})+\rho(z_{2})$.
\end{proof}

\begin{rem} In case of $k = \CC$  it suffices to consider only the additive structure on $Z$. A result from \name{Palais}' \cite{Pa1978Some-analogues-of-} on separately polynomial maps then shows that $Z$ has the structure of an algebraic group. Using the Euclidean topology on $Z$ and the universal covering $\CC^d \to Z$ one could conclude, as in the holomorphic setting (see \cite{An2011Stein-spaces-chara}), that $Z$ is isomorphic to $\CC$. 
\end{rem}

\subsection*{Proof of the main result}\lab{proof.sec}
For a variety $Y$ and a field automorphism $\sigma\colon k \to k$ we define $Y_{\sigma}:=\Spec k \times_{\Spec k}Y$ using the base change $\Spec\sigma^{-1}\colon \Spec k \to \Spec k$. 
Thus, we have a canonical bijection $\pi_{\sigma}\colon Y_{\sigma}\to Y$ which sends closed sets into closed sets and induces an isomorphism $\End(Y_{\sigma})\simto\End(Y)$, i.e. $\pi_{\sigma}$ is iso-conjugating. If $Y \subset k^{n}$ is defined by the polynomials $f_{1},\ldots,f_{m}$, then $Y_{\sigma} \subset k^{n}$ is defined by the polynomials $f_{1}^{\sigma},\ldots,f_{m}^{\sigma}$ where $(\sum_{i} c_{i}x^{i})^{\sigma}:=\sum_{i}\sigma^{-1}(c_{i}) x^{i}$, and the map $\pi_{\sigma}\colon Y_{\sigma}\to Y$ is given by $(a_{1},\ldots,a_{n})\mapsto (\sigma a_{1},\ldots,\sigma a_{n})$. In particular, $(\AA^{n})^{\sigma}=\AA^{n}$ and $\pi_{\sigma}=\sigma$ in this case.

\begin{proof}[Proof of Theorem~\ref{thm2}] We fix a closed embedding $\Aone\subseteq X$. Since $X$ is affine, the image of $\Aone$ is a zero set of the form $\VVV(S,0)$ and so $Z:=\phi(\Aone) \subset Y$ is closed and the induced map $\Aone\to Z$ is conjugating (Lemma~\ref{lem2}). It follows from Proposition~\ref{basic.prop} that  there is an isomorphism $\psi\colon Z\to\Aone$ such that the composition $\sigma:=\psi\circ\phi\colon \Aone \to \Aone$ is a field automorphism. Composing $\phi$ with $\pi_{\sigma}^{-1}\colon Y \to Y_{\sigma}$ and replacing $Y$ by $Y_{\sigma}$ we can assume that there is an embedding $\Aone\subseteq Y$ such that $\phi|_{\Aone}=\id_{\Aone}$:
$$
\begin{CD}
X   @>\phi>>  Y \\
@A{\cup}AA  @A{\cup}AA \\
\Aone @= \Aone 
\end{CD}
$$
We thus obtain an embedding $\OOO(X)=\Mor(X,\Aone) \subset \End(X)$, and similarly for $Y$. Moreover, for $f\in\OOO(Y)$, we find 
$$
\phi^{*}(f) =  f \circ \phi = \phi^{-1}\circ f \circ \phi \in \Mor(X,\Aone)=\OOO(X).
$$
The same holds for $(\phi^{-1})^{*}$ which shows that $\phi^{*}\colon\OOO(Y) \to \OOO(X)$ is an isomorphism. Since $X$ is affine, it  follows that $\psi:=\phi^{-1}\colon Y \to X$ is a morphism inducing an isomorphism $\psi^{*}\colon \OOO(X)\simto \OOO(Y)$. Now the claim follows from  Lemma~\ref{final.lem} below.
\end{proof}

\begin{lem}\label{final.lem}
Let $Y$ be a variety. Assume that $\OOO(Y)$ is finitely generated and that the canonical morphism $\psi_{Y}\colon Y \to \Spec\OOO(Y)$ is bijective. 
Then $Y$ is affine and $\psi_{Y}$ an isomorphism.
\end{lem}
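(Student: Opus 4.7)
The plan is to apply Zariski's Main Theorem to $\psi_Y$ and then exploit the tautological identity $\psi_Y^* = \id$ on global sections. Set $X := \Spec\OOO(Y)$. The morphism $\psi_Y\colon Y\to X$ is of finite type (both sides are of finite type over $k$), separated (since $Y$ is a variety), and quasi-finite (since it is bijective on points). Zariski's Main Theorem therefore provides a factorization
$$
Y \xrightarrow{j} Y' \xrightarrow{\pi} X
$$
in which $j$ is an open immersion and $\pi$ is a finite morphism. Since $\pi$ is finite and $X$ is affine, $Y'$ is affine.

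Next I would replace $Y'$ by the closure of $j(Y)$ in $Y'$, equipped with its reduced induced scheme structure: this is a closed subscheme of the affine $Y'$, hence still affine, and $\pi$ restricted to it is still finite. By construction $Y$ is open and \emph{dense} in the new $Y'$, and $Y'$ is reduced. Now consider the composition
$$
\OOO(X) \xrightarrow{\pi^*} \OOO(Y') \xrightarrow{r} \OOO(Y) = \OOO(X),
$$
where $r$ denotes restriction to the open subscheme $Y$; by the defining property of the canonical morphism this equals $\psi_Y^* = \id$. Hence $r$ is surjective, and it is also injective because $Y$ is dense in the reduced scheme $Y'$. Therefore $r$ is an isomorphism, its inverse $\pi^*$ is too, and this forces $\pi\colon Y'\to X$ to be an isomorphism of affine varieties.

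It follows that $\psi_Y = \pi\circ j$ is an open immersion $Y\into X$; being bijective, it must be an equality of open subschemes of $X$, so $Y$ is affine and $\psi_Y$ is an isomorphism. The only delicate point is the passage to the reduced closure of $j(Y)$ inside $Y'$, which is exactly what is needed to guarantee that $r$ is injective; after that step the rest of the argument is essentially formal.
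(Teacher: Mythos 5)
Your proof is correct and follows essentially the same route as the paper: Zariski's Main Theorem in Grothendieck's form gives the factorization into an open immersion followed by a finite morphism, and the tautology $\psi_Y^*=\id$ on global sections forces the finite part to be an isomorphism, so that $\psi_Y$ is a bijective open immersion. Your extra step of passing to the reduced closure of $j(Y)$ usefully makes explicit the density and reducedness needed for injectivity of the restriction map, a point the paper's terse ``inclusions $\OOO(Y)\subseteq\OOO(Y')\subseteq\OOO(Y)$'' leaves implicit.
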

\begin{proof} 
\name{Zariski}'s Main Theorem in \name{Grothendieck}'s form (\cite[Th\'eor\`eme 8.12.6]{Gr1967Elements-de-geomet}, cf. \cite[Chap.~III, \S9]{Mu1999The-red-book-of-va}) implies that  $\psi_{Y}$ admits a factorization $\psi_{Y}=\tau\circ\eta$,
\begin{diagram}
Y  &    \rInto^{\eta}     &            Y'                    \\
     &  \rdTo<{\psi_{Y}}  &    \dTo>{\tau}        \\
     &                       &    \Spec\OOO(Y)         \\
\end{diagram}
where $\eta$ is an open immersion and $\tau$ a finite morphism. From this we get inclusions $\OOO(Y) \subseteq \OOO(Y')\subseteq\OOO(Y)$, 
and thus $\tau$ is an isomorphism. It follows that $\psi_{Y}$ is a bijective open immersion, hence an isomorphism,
too.
\end{proof}

\vskip1cm

\end{document}